\newtheorem{teo}{Theorem}[section]
\newtheorem{cor}[teo]{Corollary}
\newtheorem{lema}[teo]{Lemma}
\theoremstyle{definition}
\newtheorem{alg}[teo]{Algorithm}
\newtheorem{defin}[teo]{Definition}
\newtheorem{obs}[teo]{Remark}
\theoremstyle{remark}
\newtheorem{ex}[teo]{Example}
\newcommand{\YT}[3]{
	\vcenter{\hbox{
			\begin{tikzpicture}[x={(0in,-#1)},y={(#1,0in)}] 
			\foreach \rowi [count=\i] in {#3} {
				\foreach \e [count=\j] in \rowi {
					\draw (\i,\j) rectangle +(-1,-1);
					\draw (\i-0.5,\j-0.5) node {$#2\e$};
				}
			}
			\end{tikzpicture}
		}}
	}
	\newcommand{\SYT}[3]{
		\vcenter{\hbox{
				\begin{tikzpicture}[x={(0in,-#1)},y={(#1,0in)}] 
				\foreach \rowi [count=\i] in {#3} {
					\foreach \e [count=\j] in \rowi {
						\draw (\i,-\j) rectangle +(-1,-1);
						\draw (\i-0.5,-\j-0.5) node {$#2\e$};
					}
				}
				\end{tikzpicture}
			}}
		}
\title[Type $C$ Willis' direct way]{Symplectic right keys - Type $C$ Willis' direct way}
\author[J. M. Santos]{João Miguel Santos\thanks{\href{jmsantos@mat.uc.pt}{jmsantos@mat.uc.pt}. This work was partially supported by the Center for Mathematics of the  University of Coimbra - UID/MAT/00324/2019, funded by the Portuguese Government through FCT/MEC and co-funded by the European Regional Development Fund through the Partnership Agreement PT2020. It was also supported by FCT, through the grant PD/BD/142954/2018, under POCH funds, co-financed by the European Social Fund and Portuguese National Funds from MEC.}}
\address{CMUC, Department of Mathematics, University of Coimbra, Apartado 3008,
	3001--454 Coimbra, Portugal}
\abstract{It is known that the right key of a Kashiwara-Nakashima tableau can be computed using the Lecouvey-Sheats symplectic \emph{jeu de taquin}. Motivated by Willis' direct way of computing type $A$ right keys, we also give a way of computing symplectic right keys without the use of \emph{jeu de taquin}.	
}
\keywords{Direct way, Symplectic keys, Right key map, Symplectic \emph{jeu de taquin}}
\begin{document}

\maketitle

\section{Introduction}
Symplectic tableaux provide the monomial weight generators for the characters of the symplectic Lie algebra $sp(2n, \mathbb{C})$. Given a partition $\lambda$, symplectic Kashiwara-Nakashima tableaux \cite{Kashiwara1994CrystalGF}, a variation of De Concini tableaux \cite{de1979symplectic}, of shape $\lambda$ are endowed with a crystal structure $\mathfrak{B}_\lambda$ compatible with a plactic monoid  and sliding algorithms, studied by Lecouvey in terms of crystal isomorphisms \cite{lecouvey2002schensted}.
Let $W\lambda$ be the orbit of $\lambda$,  where $W$ is the type $C_n$ Weyl group. Type $C_n$ Demazure characters are indexed by vectors $v$ in $W\lambda$ and can be seen as "partial" characters. Kashiwara \cite{kashiwara1992crystal} and Littelmann \cite{littelmann1995crystal} have shown that they can be obtained by summing the monomial weights over certain subsets $\mathfrak{B}_v$ in the crystal $\mathfrak{B}_\lambda$, called Demazure crystals. Demazure crystals $\mathfrak{B}_v$ can be partitioned into Demazure atom crystals, $\widehat{\mathfrak{B}}_u$, where $u\in W\lambda$ runs in the Bruhat interval $\lambda\le u\le v$. 


In type $A_{n-1}$, Lascoux and Sch\"{u}tzenberger characterized key tableaux tableaux as semistandard Young tableaux (SSYT) with nested columns \cite{lascoux1990keys}, and have used the \emph{jeu de taquin} to define the right key map which sends a SSYT to a key tableau, called the right key of that SSYT. In each Demazure atom crystal there exists exactly one key tableau and the right key detects the Demazure atom crystal that contains a given SSYT \cite[Theorem $3.8$]{lascoux1990keys}. 
By direct inspection of a Young tableau, Willis \cite{Willis2011ADW} has  given an alternative algorithm to compute the right key tableau that does not require the use of \emph{jeu de taquin}. 
Other methods to compute the type $A$ right key map includes the alcove path model \cite{lenart2015generalization}, semi skyline augmented fillings \cite{Mason2007AnEC}, and coloured vertex models. For a complete overview in type $A$, see \cite{BRUBAKER2021105354} and the references therein. In type $C_n$, the symplectic key tableaux are characterized in \cite{azenhas2012key, santos2019symplectic, santos2020SLCsymplectic,jacon2019keys}. Using the Sheats symplectic \emph{jeu de taquin}, a right key map is given, in \cite{santos2019symplectic, santos2020SLCsymplectic,jacon2019keys}, to send a Kashiwara-Nakashima (KN) tableau $T$ to its right key tableau $K_+(T)$, that detects the Demazure atom crystal which contains $T$.
They are also computed in the type $C_n$ alcove path model \cite{lenart2015generalization}, as well as for reverse King tableau using the colored five vertex model \cite{buciumas2021quasi}.

Jacon and Lecouvey \cite{jacon2019keys}  have suggested that Willis' method should be adaptable to type $C_n$.   Motivated by Willis' direct inspection \cite{Willis2011ADW}, we create an alternative algorithm, based on the direct inspection of a KN tableau, for the symplectic right key map, that does not use the symplectic \emph{jeu de taquin}.
Due to the added technicality of the symplectic \emph{jeu de taquin} compared to the one for SSYT, Willis' \emph{earliest weakly increasing subsequence} will fail to predict what gets slid during the Sheats symplectic \emph{jeu de taquin}. Instead we need a way to calculate, without the use of \emph{jeu de taquin}, what would appear in each column if we were to swap its length with the previous column length via \emph{jeu de taquin}. The role of Willis' sequences will be replaced by our matchings (see Section \ref{rightkeydw}). In type $A$, these kind of matches were used earlier \cite{azenhas2006schur}.

The paper is organized as follows. In Section \ref{tabjdt}, we discuss the  type $C$ Kashiwara-Nakashima tableaux and the symplectic \emph{jeu de taquin}. 
Section \ref{rightkeyjdt} briefly recalls the symplectic key tableaux and right key map via symplectic \emph{jeu de taquin} \cite{santos2019symplectic}.
In Section \ref{rightkeydw}, we give an algorithm for computing  the symplectic right key map that does not require the \emph{jeu de taquin}, and prove that it returns the same object as the previous method.

\textit{This is an extended abstract of a full paper to appear.}
\section{Type $C$ Kashiwara-Nakashima tableaux and \emph{jeu de taquin}}\label{tabjdt}

We recall the symplectic tableaux introduced by Kashiwara and Nakashima to label the vertices of the type $C_n$ crystal graphs \cite{Kashiwara1994CrystalGF}.
Fix $n\in \mathbb{N}_{>0}$. Define the sets $[n]=\{1, \dots, n\}$ and $[\pm n]=\{1,\dots, n, \overline{n}, \dots, \overline{1}\}$ where $\overline{i}$ is just another way of writing $-i$, hence $\overline{\overline{i}}=i$. In the second set we will consider the following order of its elements: $1<\dots< n< \overline{n}< \dots< \overline{1}$ instead of the usual order.
A vector $\lambda=(\lambda_1,\dots, \lambda_n)\in\mathbb{Z}^n$ is a partition of $|\lambda|=\sum\limits_{i=1}^n \lambda_i$ if $\lambda_1\geq \lambda_2\geq\dots\geq\lambda_n\geq 0$.
The \emph{Young diagram} of shape $\lambda$, in English notation, is an array of boxes (or cells), left justified, in which the $i$-th row, from top to bottom, has $\lambda_i$ boxes. We identify a partition with its Young diagram.
For example, the Young diagram of shape $\lambda=(2,2,1)$ is $\YT{0.17in}{}{
	{{},{}},
	{{},{}},
	{{}}}$.
Given $\mu$ and $\nu$ two partitions with $\nu\leq \mu$ entrywise, we write $\nu\subseteq \mu$. The Young diagram of shape $\mu/\nu$ is obtained after removing the boxes of the Young diagram of $\nu$ from the Young diagram of $\mu$.
For example, the Young diagram of shape $\mu/\nu=(2,2,1)/(1,0,0)$ is $\begin{tikzpicture}[scale=.4, baseline={([yshift=-.8ex]current bounding box.center)}]
\draw (1,0) rectangle +(-1,-1);
\draw (1,-1) rectangle +(-1,-1);
\draw (0,-2) rectangle +(-1,-1);
\draw (0,-1) rectangle +(-1,-1);
\end{tikzpicture}$\,.
	Let $\nu\subseteq \mu$ be two partitions and $A$ a completely ordered alphabet. A \emph{semistandard Young tableau}, for short SSYT, of skew shape $\mu/\nu$ on the alphabet $A$ is a filling of the diagram $\mu/\nu$ with letters from $A$, such that the entries are strictly increasing, from top to bottom, in each column and weakly increasing, from left to right, in each row. When $|\nu|=0$ then we obtain a SSYT of shape $\mu$.
Denote by $\text{SSYT}(\mu/\nu, A)$
the set of all skew SSYT $T$ of shape $\mu/\nu$, with entries in $A$. When $|v|=0$ we write $\text{SSYT}(\mu, A)$ and when $A=[n]$ we write $\text{SSYT}(\mu/\nu, n)$.
When considering tableaux with entries in $[\pm n]$, it is usual to have some extra conditions besides being semistandard. We will use a family of tableaux known as \emph{Kashiwara-Nakashima} tableaux.
From now on we consider tableaux on the alphabet $[\pm n]$.

A \emph{column} is a strictly increasing sequence of numbers (or letters) in $[\pm n]$ and it is usually displayed vertically. The heigth of a column ifs number of letters in it.
A column is said to be \emph{admissible} if the following \emph{one column condition} (1CC) holds for that column:

\begin{defin}[1CC]\label{1CC}
	Let $C$ be a column. The $1CC$ holds for $C$ if for all pairs $i$ and $\overline{i}$ in $C$, where $i$ is in the $a$-th row counting from the top of the column, and $\overline{i}$ in the $b$-th row counting from the bottom, we have $a+b\leq i$.
\end{defin}
If a column $C$ satisfies the $1CC$ then $C$ has at most $n$ letters.
If $1CC$ doesn't hold for $C$ we say that $C$ \emph{breaks the $1CC$ at $z$}, where $z$ is the minimal such that $z$ and $\overline{z}$ exist in $C$ and there are more than $z$ numbers in $C$ with absolute value less or equal than $z$. 
For instance, the column $\YT{0.17in}{}{
		{{1}},
		{{2}},
		{{\overline{1}}}}$ breaks the $1CC$ at $1$, and $\YT{0.17in}{}{
		{{2}},
		{{3}},
		{{\overline{3}}}}$ is an admissible column.
The following definition states conditions to when $C$ can be \emph{split}:
\begin{defin}
	Let $C$ be a column and let $I = \{z_1 > \dots > z_r\}$ be the
	set of unbarred letters $z$ such that the pair $(z, \overline{z})$ occurs in $C$. The column
	$C$ can be split when there exists a set of $r$ unbarred
	letters $J = \{t_1 > \dots > t_r\} \subseteq [n]$ such that:

\boldmath{1.} $t_1$ is the greatest letter of $[n]$ satisfying $t_1 < z_1$,  $t_1 \not\in C$, and $\overline{t_1}\not\in C$,

\boldmath{2.} for $i=2, \dots, r$, we have that  $t_i$ is the greatest letter of $[n]$ satisfying $t_i < \min(t_{i-1},   z_i)$,  $t_i \not\in C$, and $\overline{t_i} \not\in C$.
\end{defin}
	The column $C$ is admissible if and only if $C$ can be split  \cite[Lemma 3.1]{sheats1999symplectic}.
	If $C$ can be split then we define \emph{right column} of $C$, $\text{r}C$, and the \emph{left column} of $C$, $\ell C$, as follows:
	
	\boldmath{1.} $rC$ is obtained by changing in $C$,  $\overline{z_i}$ into $\overline{t_i}$ for each $z_i \in I $ and reordering,
		
	\boldmath{2.} $\ell C$ is obtained by changing in $C$, $z_i$ into $t_i$ for each  $z_i \in I $ and reordering.

	If $C$ is admissible then $\ell C\leq C \leq rC$ by entrywise comparison, where $\ell C$ has the same barred part as $C$ and $rC$ the same unbarred part. If $C$ doesn't have symmetric entries, then $C$ is admissible and  $\ell C= C =rC$.
	In the next definition we give conditions for a column $C$ to be \emph{coadmissible}.
	\begin{defin}
		We say that a column $C$ is coadmissible if for every pair $i$ and $\overline{i}$ on $C$, where $i$ is on the $a$-th row counting from the top of the column, and $\overline{i}$ on the $b$-th row counting from the top, then $b-a\leq n-i$.
	\end{defin}
\noindent
	Given an admissible column $C$, consider the map $\Phi$ that sends $C$ to the column $\Phi(C)$ of the same size in which the unbarred entries are taken from $\ell C$ and the barred entries are taken from $rC$. The \emph{column $\Phi(C)$} is coadmissible  and the algorithm to form $\Phi(C)$ from $C$ is reversible \cite[Section 2.2]{lecouvey2002schensted}. In particular, every column without symmetric entries is simultaneously admissible and coadmissible. 
	
	\begin{ex}Let $n=4$ and
		$C=\!\YT{0.17in}{}{
			{{2}},
			{{4}},
			{{\overline{2}}}}$ be an admissible column. Then $\ell C=\YT{0.17in}{}{
			{{1}},
			{{4}},
			{{\overline{2}}}}$ and $rC=\YT{0.17in}{}{
			{{2}},
			{{4}},
			{{\overline{1}}}}$. So $\Phi(C)=\YT{0.17in}{}{
			{{1}},
			{{4}},
			{{\overline{1}}}}$ is coadmissible. $C$ is also coadmissible and $\Phi^{-1}(C)=\YT{0.17in}{}{
			{{3}},
			{{4}},
			{{\overline{3}}}}$.
	\end{ex}
	
	Let $T$ be a skew tableau with all of its columns admissible. The \emph{split form} of a skew tableau $T$, $spl(T)$, is the skew tableau obtained after replacing each column $C$ of $T$ by the two columns $\ell C\,rC$. The tableau $spl(T)$ has double the amount of columns of $T$.

		A skew SSYT $T$ is a \emph{Kashiwara-Nakashima (KN) skew tableau} if its split form is a skew SSYT. We define $\mathcal{KN}(\mu/\nu, n)$ to be the set of all KN tableaux of shape $\mu/\nu$ in the alphabet $[\pm n]$. When $\nu=0$, we obtain $\mathcal{KN}(\mu, n)$. The weight of $T$ is a vector whose $i$-th entry is the number of $i$'s minus the number of $\overline{i}$.

	\begin{ex} The split of the tableau 
		$T=\YT{0.17in}{}{
			{{2},{2}},
			{{3},{3}},
			{{\overline{3}}}}$ is the tableau $spl(T)=\YT{0.17in}{}{
			{{1}, {2},{2},{2}},
			{{2},{3},{3},{3}},
			{{\overline{3}},{{\overline{1}}}}}$. Hence $T\in \mathcal{KN}((2,2,1),3)$ and weight $\text{wt} T=(0,2,1)$.
	\end{ex}
	
	If $T$ is a tableau without symmetric entries in any of its columns, i.e., for all $i\in[n]$ and for all columns $C$ in $T$, $i$ and $\overline{i}$ do not appear simultaneously in the entries of $C$, then in order to check whether $T$ is a KN tableau it is enough to check whether $T$ is semistandard in the alphabet $\left[\pm n\right]$. In particular $SSYT(\mu/\nu, n)\subseteq \mathcal{KN}(\mu/\nu, n)$.
	
	\subsection{Sheats symplectic \textit{jeu de taquin}}
Sheats symplectic \textit{jeu de taquin} (SJDT) \cite{lecouvey2002schensted, sheats1999symplectic} is a procedure on KN skew tableaux, compatible with \emph{Knuth equivalence} (or plactic equivalence on words over the alphabet $[\pm n]$) \cite{lecouvey2002schensted}, that allows us to change the shape of a tableau and to rectify it.   
	To explain how the SJDT behaves, we need to look how it works on $2$-column $C_1C_2$ KN skew tableaux. A skew tableau is \emph{punctured} if one of its box contains
	the symbol $\ast$ called the \emph{puncture}.
	A punctured column
	is admissible if the column is admissible when ignoring the puncture. A punctured skew tableau is
	admissible if its columns are admissible and the rows of its split form are weakly increasing ignoring
	the puncture. Let $T$ be a punctured skew tableau with two columns $C_1$ and $C_2$ with the puncture
	in $C_1$. In that
	case, the puncture splits into two punctures in $spl(T)$, and ignoring the punctures, $spl(T)$ must be semistandard. 
	Let $\alpha$ be the entry under the puncture of $rC_1$, and $\beta$ the entry to the right of the puncture of $rC_1$ ($\alpha$ or $\beta$ may not exist):
	$spl(T)=\ell C_1 rC_1\ell C_2rC_2=\YT{0.2in}{}{
		{{\dots},{\dots}, {\dots}, {\dots}},
		{{\ast},{\ast},{\beta},{\dots}},
		{{\dots},{\alpha},{\dots},{\dots}},
		{{\dots},{\dots}}}.$
	
\noindent The elementary steps of SJDT are the following:

 \textbf{A.} If $\alpha\leq \beta$ or $\beta$ does not exist,  then the puncture of $T$ will change its position with the cell beneath it. This is a vertical slide.

 \textbf{B.}	If the slide is not vertical, then it is horizontal. So we have $\alpha> \beta$ or $\alpha$ does not exist. Let $C_1'$ and $C_2'$ be the columns obtained after the slide. We have two subcases, depending on the sign of $\beta$:

	\boldmath{1.} If $\beta$ is barred, we are moving a barred letter, $\beta$, from $\ell C_2$ to the punctured box of $rC_1$, and the puncture will occupy $\beta$'s place in $\ell C_2$. Note that $\ell C_2$ has the same barred part as $C_2$ and that $rC_1$ has the same barred part as $\Phi(C_1)$. Looking at $T$, we will have an horizontal slide of the puncture, getting $C_2'=C_2\setminus \{\beta\}\sqcup\{\ast\}$ and $C_1'=\Phi^{-1}(\Phi(C_1)\setminus{\ast}\sqcup \{\beta\})$. In a sense, $\beta$ went from $C_2$ to $\Phi(C_1)$.
		
	\boldmath{2.} If $\beta$ is unbarred, we have a similar story, but this time $\beta$ will go from $\Phi(C_2)$ to $C_1$, hence $C_1'=C_1\setminus{\ast}\cup \{\beta\}$ and $C_2'=\Phi^{-1}(\Phi(C_2)\setminus \{\beta\}\sqcup{\ast})$. Although in this case it may happen that $C_1'$ is no longer admissible. In this situation, if the 1CC breaks at $i$, we erase both $i$ and $\overline{i}$ from the column and remove a cell from the bottom and from the top column, and place all the remaining cells orderly.
	
	Applying successively elementary SJDT steps, eventually, the puncture will be a cell such that $\alpha$ and $\beta$ do not exist. Then, we redefine the shape to not include this cell and SJDT ends. 
	Given an admissible tableau $T$ of shape $\mu/\nu$, a box of the diagram of shape $\nu$ such that boxes under it and to the right are not in that shape is called an \emph{inner corner} of $\mu/\nu$. An outside corner is a box of $\mu$ such that boxes under it and to the right are not in the shape $\mu$. The rectification of $T$ consists in playing the SJDT until we get a tableau of shape $\lambda$, for some partition $\lambda$. More precisely, apply successively elementary SJDT steps
	to $T$ until each cell of $\nu$ becomes an \emph{outside corner}. At the end, we obtain a KN tableau for some
	shape $\lambda$. The rectification is independent of the order in which the inner corners of $\nu$ are filled \cite[Corollary 6.3.9]{lecouvey2002schensted}. 
	
 Consider the KN skew tableau
		$T=\SYT{0.145in}{}{
			{{2}},
			{{3},{1}},
			{{\overline{1}},{2}}}$.
		To rectify $T$ via SJDT, one creates a puncture in a inner corner and split, obtaining $\YT{0.17in}{}{
			{{\ast}, {\ast},{2},{2}},
			{{1},{1},{3},{3}},
			{{2},{2},{\overline{1}},{\overline{1}}}}$. So, the first two slides are vertical, obtaining $\YT{0.17in}{}{
			{{1}, {1},{2},{2}},
			{{2},{2},{3},{3}},
			{{\ast},{\ast},{\overline{1}},{\overline{1}}}}$. Finally, we do an horizontal slide, of type $B.1$, in which we take $\overline{1}$ from the second column of $T$ and add it to the coadmissible column of the first column of $T$, obtaining the tableau
		$\YT{0.145in}{}{
			{{2},{2}},
			{{3},{3}},
			{{\overline{3}}}}$.
Let $T$ be a skew tableau of shape $\mu/\nu$. Consider a punctured box that can be added to $\mu$, so that $\mu\cup \{\ast\}$ is a valid shape.
The SJDT is reversible, meaning that we can move $\ast$, the empty cell outside of $\mu$, to the inner shape $\nu$ of the skew tableau $T$, simultaneously increasing both the inner and outer shapes of $T$ by one cell. The slides work similarly to the previous case: the vertical slide means that an empty cell is going up and an horizontal slide means that an entry goes from $\Phi(C_1)$ to $C_2$ or from $C_1$ to $\Phi(C_2)$, depending on whether the slid entry is barred or not, respectively.
We will also call the \emph{reverse jeu de taquin} as SJDT. In the next sections we will be mostly dealing with the \emph{reverse jeu de taquin}.
Consider the following examples, each containing a tableau and a punctured box that will be slid to its inner shape:
$\YT{0.17in}{}{
	{{},{},{\ast}},
	{{1},{\overline{1}}},
	{{2}}} \mapsto \YT{0.17in}{}{
	{{},{},{}},
	{{1},{\overline{1}}},
	{{2}}}$;\quad
$\YT{0.17in}{}{
	{{},{}},
	{{1},{\overline{1}}},
	{{2},{\ast}}}\mapsto \YT{0.17in}{}{
	{{},{}},
	{{},{2}},
	{{2},{\overline{2}}}}$.
	
		If columns $C_1$ and $C_2$ do not have negative entries then the SJDT applied to $C_1C_2$ coincides with the \textit{jeu de taquin} known for SSYT. 
		Next section, we use SJDT to swap lengths of consecutive columns in a skew tableau,  to obtain skew tableaux Knuth related to a straight tableau, which is minimal for the number of cells. Hence, SJDT will not incur in a loss/gain of boxes, that could happen in the elementary step $B.2$.
	

	\section{The right key of a tableau - \emph{Jeu de taquin} approach}\label{rightkeyjdt}
	
	A \emph{key tableau} of shape $\lambda$, in type $C_n$, is a KN tableau in $\mathcal{KN}(\lambda, n)$, in which the set of elements of
	each column, right to left, contains the set of elements of the previous column, if any, and the letters $i$ and $\overline{i}$ do not appear simultaneously as entries, for any $i \in[n]$. The
	split form of a KN key tableau consists of the duplication of each column.
	
	The set $\mathcal{KN}(\lambda, n)$ is endowed with a symplectic crystal structure, denoted $\mathfrak{B}_\lambda$ \cite{Kashiwara1994CrystalGF, lecouvey2002schensted}. The key tableaux in $\mathcal{KN}(\lambda, n)$ are the unique tableaux in $\mathcal{KN}(\lambda, n)$ whose weight is $\alpha \lambda$, for all elements $\alpha$ in the Weyl group $W$, denoted $key(\alpha \lambda)$. The orbit of $key(\lambda)$, the highest weight element of $\mathfrak{B}_\lambda$, under the action of the Weyl group $W$, is defined to be $O(\lambda) = \{\text{key}(\alpha \lambda) : \alpha \in W\}$.
	The \emph{right key map} $K_+$ sends each $T\in \mathcal{KN}(\lambda, n)$ to one element $K_+(T)$ in the
	orbit $O(\lambda)$, called the \emph{right key tableau} of $T$. The right key of a
	KN tableau $T$ is a key tableau of the same shape as $T$, entrywise ”slightly” bigger than $T$. In \cite{santos2019symplectic, santos2020SLCsymplectic}
	such tableau is computed using the aforementioned SJDT.
	\begin{lema}\label{sahpesjdt}\cite{santos2019symplectic, santos2020SLCsymplectic}
		Given $T\in \mathcal{KN}(\lambda, n)$ and a skew shape whose column lengths are a permutation of the column lengths of $T$, then there is exactly one skew tableau with that shape that rectifies to $T$. Futhermore, the last column only depends on its length.
	\end{lema}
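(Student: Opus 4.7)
The plan is to reduce the statement to the elementary operation of swapping two adjacent column lengths via SJDT, since any permutation of the column lengths of $T$ is a product of adjacent transpositions. I will first handle existence and uniqueness of the skew tableau, and then deduce the ``last column depends only on its length'' claim by a localization argument.

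For existence, I would start from $T$ and iterate adjacent swaps. Given two adjacent columns of heights $h$ and $h'$, a reverse SJDT slide launched from a carefully chosen outer corner propagates through these two columns and outputs a pair of adjacent columns with swapped lengths, leaving the remainder of the tableau unchanged; iterating this realises any permutation of the column lengths. The result rectifies back to $T$ because SJDT preserves Knuth equivalence. For uniqueness, suppose $U$ is any skew tableau of the target shape $\mu/\nu$ rectifying to $T$. Running forward SJDT on $U$ yields $T$, and the sequence of slides is determined once one fixes an order of the inner corners of $\nu$; by the reversibility of SJDT together with the order-independence of rectification (Corollary 6.3.9 of \cite{lecouvey2002schensted}), the reverse SJDT from $T$ to shape $\mu/\nu$ is likewise determined, so $U$ coincides with the tableau constructed in the existence argument.

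For the ``furthermore'' assertion, let $S_1$ and $S_2$ be two skew shapes whose last columns both have length $k$ and whose first $m-1$ column lengths are the same multiset. Starting from the unique $U_1$ of shape $S_1$ rectifying to $T$, I would apply a sequence of adjacent-column SJDT swaps confined to columns $1,\dots,m-1$ to reach shape $S_2$; because these slides are local to the two columns involved, the last column of $U_1$ is never touched. The resulting skew tableau rectifies to $T$ and has shape $S_2$, so by the uniqueness part it must equal $U_2$, whence $U_1$ and $U_2$ share the same last column.

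The main obstacle is that in type $C$, unlike in type $A$, elementary SJDT step $B.2$ can destroy a pair $\{i,\overline{i}\}$ and hence change the cell count. The technical heart of the argument is therefore to check that when the target shape merely permutes the multiset of column lengths of $T$, each adjacent-column swap in the construction can be executed without triggering step $B.2$, so that the number of cells is conserved throughout. This requires tracing the interplay between the split form, the one-column condition, and the slide rules during a single adjacent swap; once this is secured, the rest of the proof reduces to bookkeeping on the decomposition of a permutation into adjacent transpositions.
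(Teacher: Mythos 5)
This lemma is only quoted in the present paper (it is imported from the cited works, where the proof reduces everything to SJDT swaps of two adjacent columns), so the comparison is with that argument; your overall plan — realize the permutation by adjacent column-length swaps and localize the "furthermore" claim — is the right skeleton, but two of its load-bearing steps are not actually established. The serious gap is uniqueness. Your argument never uses the hypothesis that the column lengths of the skew shape are a permutation of those of $\lambda$; as written it would show that \emph{any} skew shape with $|\lambda|$ boxes carries at most one filling rectifying to $T$. That is already false in type $A$, where the number of such fillings is a Littlewood--Richardson coefficient: for $\lambda=(2,1)$ and the shape $(3,2,1)/(2,1)$ there are $c^{(3,2,1)}_{(2,1),(2,1)}=2$ distinct skew tableaux rectifying to the highest-weight tableau of shape $(2,1)$. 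The flaw is the sentence claiming that "the reverse SJDT from $T$ to shape $\mu/\nu$ is likewise determined": reversibility lets you undo the rectification of a \emph{given} $U$, but the resulting reverse slide sequence is determined by $U$, not by the target shape, so two distinct fillings $U_1\neq U_2$ would simply yield two distinct reverse sequences from $T$ ending in the same shape — no contradiction. Corollary 6.3.9 of Lecouvey concerns forward rectification and does not provide the missing statement that anti-rectification toward a prescribed shape is choice-independent. A correct proof must exploit the special shapes, e.g.\ by reducing to the two-column case, where the cells to vacate and every elementary reverse slide are forced, and then proceeding column by column.

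Second, you explicitly defer the "technical heart," namely that the contraction in step B.2 (loss of a pair $i,\overline{i}$) never occurs during the swaps; as it stands this part is a plan, not a proof. Moreover, the delicate case analysis you envisage is not needed: every skew tableau involved rectifies to $T$, which has the same number of boxes $|\lambda|$, and contraction only ever decreases the box count, so it can never be triggered — this is exactly the minimality remark at the end of Section 2 of the paper. Finally, the locality claim in your existence and "furthermore" steps — that an adjacent swap can be carried out by slides confined to the two columns involved, leaving the last column untouched — is also asserted rather than proved; it is true and is how Algorithm 3.3 operates, but it rests on the same two-column analysis that your proposal leaves open.
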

	\begin{defin}[Right key map]\cite{santos2019symplectic, santos2020SLCsymplectic}\label{rightkeymap}
		Given $T\in \mathcal{KN}(\lambda, n)$, we 
		consider the KN skew tableaux with the same number of columns of each length as $T$, each one corresponding to a permutation of its column lengths. 
		Then we replace each column of $T$ with a column of the same size taken from the right columns of the last columns of all those skew tableaux associated to $T$. This tableau is the right key tableau of $T$, $K_+(T)$. This map restricted to $SSYT(\lambda,n)$ recovers the Lascoux-Schützenberger right key map \cite{lascoux1990keys}.
	\end{defin}
Given $T\in \mathcal{KN}(\lambda, n)$ we apply the SJDT on consecutive columns to compute all skew tableaux in the conditions the previous definition. For instance, the tableau $T=\YT{0.17in}{}{
			{{1},{3},{\overline{1}}},
			{{3},{\overline{3}}},
			{{\overline{3}}}}$ gives rise to six KN skew tableaux with the same number of columns of each length as $T$, each one corresponding to a permutation of its column lengths.
\begin{equation}\label{jeudetaquin3}		\begin{tikzpicture}
		\node at (0,0) {$\YT{0.17in}{}{
				{{1},{3},{\overline{1}}},
				{{3},{\overline{3}}},
				{{\overline{3}}}}$};	
		\node at (4,1) {$\begin{tikzpicture}[scale=.4, baseline={([yshift=-.8ex]current bounding box.center)}]
			\draw (0,0) rectangle +(1,1);
			\draw (0,1) rectangle +(1,1);
			\draw (0,2) rectangle +(1,1);
			\draw (1,2) rectangle +(1,1);
			\draw (2,2) rectangle +(1,1);
			\draw (2,3) rectangle +(1,1);
			\node at (.5,.5) {$\overline{3}$};
			\node at (.5,1.5) {$3$};
			\node at (.5,2.5) {$1$};
			\node at (1.5,2.5) {$\overline{3}$};
			\node at (2.5,2.5) {$\overline{1}$};
			\node at (2.5,3.5) {$3$};
			\end{tikzpicture}$};
		\node at (8,1) {$\begin{tikzpicture}[scale=.4, baseline={([yshift=-.8ex]current bounding box.center)}]
			\draw (0,0) rectangle +(1,1);
			\draw (1,0) rectangle +(1,1);
			\draw (1,1) rectangle +(1,1);
			\draw (1,2) rectangle +(1,1);
			\draw (2,1) rectangle +(1,1);
			\draw (2,2) rectangle +(1,1);
			\node at (.5,.5) {$2$};
			\node at (1.5,.5) {$\overline{2}$};
			\node at (1.5,1.5) {$\overline{3}$};
			\node at (1.5,2.5) {$1$};
			\node at (2.5,1.5) {$\overline{1}$};
			\node at (2.5,2.5) {$3$};
			\end{tikzpicture}$};
		\node at (4,-1) {$\begin{tikzpicture}[scale=.4, baseline={([yshift=-.8ex]current bounding box.center)}]
			\draw (0,0) rectangle +(1,1);
			\draw (0,1) rectangle +(1,1);
			\draw (1,0) rectangle +(1,1);
			\draw (1,1) rectangle +(1,1);
			\draw (1,2) rectangle +(1,1);
			\draw (2,2) rectangle +(1,1);
			\node at (.5,.5) {$2$};
			\node at (.5,1.5) {$1$};
			\node at (1.5,.5) {$\overline{2}$};
			\node at (1.5,1.5) {$\overline{3}$};
			\node at (1.5,2.5) {$3$};
			\node at (2.5,2.5) {$\overline{1}$};
			\end{tikzpicture}$};
		\node at (8,-1) {$$\begin{tikzpicture}[scale=.4, baseline={([yshift=-.8ex]current bounding box.center)}]
			\draw (0,0) rectangle +(1,1);
			\draw (0,1) rectangle +(1,1);
			\draw (1,1) rectangle +(1,1);
			\draw (2,1) rectangle +(1,1);
			\draw (2,2) rectangle +(1,1);
			\draw (2,3) rectangle +(1,1);
			\node at (.5,.5) {$2$};
			\node at (.5,1.5) {$1$};
			\node at (1.5,1.5) {$\overline{2}$};
			\node at (2.5,1.5) {$\overline{1}$};
			\node at (2.5,2.5) {$\overline{3}$};
			\node at (2.5,3.5) {$3$};
			\end{tikzpicture}$$};
		\node at (12,0) {$\SYT{0.17 in}{}{{{3}},{{\overline{3}},{1}},{{\overline{1}},{\overline{2}},{2}}}$};
		\draw [->] (1,.5) -- (3,1);
		\draw [->] (1,-.5) -- (3,-1);
		\draw [->] (5,1) -- (7,1);
		\draw [->] (5,-1) -- (7,-1);
		\draw [->] (9,1) -- (11,.5);
		\draw [->] (9,-1) -- (11,-.5);	
		\end{tikzpicture}\end{equation}
		
		\noindent The right key tableau of $T$ has columns $r\!\YT{0.17in}{}{
			{{3}},
			{{\overline{3}}},
			{{\overline{1}}}}$, $r\!\YT{0.17in}{}{
			{{3}},
			{{\overline{1}}}}$ and $r\!\YT{0.17in}{}{
			{{\overline{1}}}}$. 
		Hence $K_+(T)=\YT{0.17in}{}{
			{{3},{3},{\overline{1}}},
			{{\overline{2}},{\overline{1}}},
			{{\overline{1}}}}$.
	Lemma \ref{sahpesjdt} shows that the column commutation action defined by the SJDT on two
	consecutive columns of a straight KN tableau $T$ of shape $\lambda$ gives rise to a permutohedron
	where the vertices are all the KN skew tableaux in the Knuth class of $T$ whose column
	length sequence is a permutation of the column length sequence of $T$ \cite{lascoux1990keys}. For instance, \eqref{jeudetaquin3} is a
	permutohedron (hexagon) for $\mathfrak{S}_3$. 

	Let $T=C_1C_2\cdots C_k$ be a straight KN tableau with columns $C_1, C_2, \dots,  C_k$. Note that, to compute which entries appear in the $i$-th column of $K_+(T)$ we do not need to look to the first $i-1$ columns of $T$. We only need the last column of a skew tableau obtained by applying the SJDT to the columns $C_i\dots C_k$ of $T$, so that the last column has the length of $C_i$, because by Lemma \ref{sahpesjdt} all last columns of skew tableaux associated to $T$ with the same length  
	are equal. Let $K_+^1(T)$ be the map that given a tableau returns the first column of $K_+(T)$.
	This is noticeable in \ref{jeudetaquin3} where $K_+(T)=K^1_+(C1C2C3)K^1_+(C2C3)K^1_+(C3)$. In general, $K_+(T)=K_+^1(C_1\cdots C_k)K_+^1(C_2\cdots C_k)\cdots K_+^1(C_k)$.
	Based on this observation and Lemma \ref{sahpesjdt}, next algorithm refines Definition \ref{rightkeymap} to compute $K_1^+(T)$ using SJDT:
	\begin{alg}\label{rightkjdtq}
		Let $T$ be a straight KN tableau:
		
		\textbf{1.} Let  $i=2$.
		
		\textbf{2.} If $T$ has exactly one column, return the right column of $T$. Otherwise, let $T_i:=T_2$ be the tableau formed by the first two columns of $T$. 
		
		\textbf{3.} If the length of the two columns of $T_i$ is the same, put $T_i':=T_i$. Else, play the SJDT on $T_i$ until both column lengths are swapped, obtaining $T_i'$.
		
		\textbf{4.} If $T$ has more than $i$ columns, redefine $i:=i+1$, and define $T_i$ to be the two-columned tableau formed with the rightmost column of $T'_{i-1}$ and the $i$-th column of $T$, and go back to $2.$. Else, return the right column of the rightmost column of $T_i'$.
	\end{alg}
	
	This algorithm is exemplified on the bottom path of \eqref{jeudetaquin3}.

	\begin{cor}
		If $T$ is a rectangular tableau, $K_+(T)=rC_k  rC_k\dots rC_k$ ($k$ times).
	\end{cor}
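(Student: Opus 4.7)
The plan is to reduce the claim to the observation made just above Algorithm \ref{rightkjdtq}, namely that
\[
K_+(T)=K_+^1(C_1\cdots C_k)\,K_+^1(C_2\cdots C_k)\cdots K_+^1(C_k),
\]
so it suffices to establish that $K_+^1(C_i\cdots C_k)=rC_k$ for every $i=1,\dots,k$.

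First I would note that each suffix $C_i\cdots C_k$ of a rectangular KN tableau is itself a rectangular KN tableau: the KN condition is a condition on pairs of adjacent columns (that the split form is semistandard), which is preserved when deleting columns from the left, and the column lengths remain constant equal to the common length of the columns of $T$.

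Next I would run Algorithm \ref{rightkjdtq} on $C_i\cdots C_k$ and observe that at every execution of step $3$ the two columns of $T_j$ have the same length; hence the \textit{else} branch is never taken, no SJDT slides occur, and $T_j':=T_j$ throughout. Consequently the rightmost column of $T_j'$ at the $j$-th pass is exactly the column $C_{i+j-1}$ of the original tableau, and when $j$ reaches the last index the algorithm returns the right column of that rightmost column, which is $rC_k$. The case $i=k$ exits at step $2$ and also returns $rC_k$.

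Assembling these equalities into the decomposition yields $K_+(T)=rC_k\,rC_k\cdots rC_k$ ($k$ times). There is no real obstacle here; the argument is essentially an unwinding of definitions, the key point being that in the rectangular case Algorithm \ref{rightkjdtq} performs no slides and simply propagates $C_k$ to the end. One could equivalently invoke Definition \ref{rightkeymap} directly: for a rectangular shape there is only one permutation of the column lengths, so by Lemma \ref{sahpesjdt} the unique skew tableau associated to $T$ is $T$ itself, whose last column is $C_k$, and every column of $K_+(T)$ is replaced by $rC_k$.
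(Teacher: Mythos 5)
Your proof is correct and is essentially the argument the paper intends: the corollary is an immediate unwinding of the decomposition $K_+(T)=K_+^1(C_1\cdots C_k)\cdots K_+^1(C_k)$ together with the fact that, for equal column lengths, Algorithm \ref{rightkjdtq} performs no slides (equivalently, via Definition \ref{rightkeymap} and Lemma \ref{sahpesjdt}, the only associated skew tableau is $T$ itself with last column $C_k$). Both of your routes match the paper's reasoning, so there is nothing to add.
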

Next, we present a way of computing $K_+^1(T)$ that does not require the SJDT. Willis has done this when $T$ is a SSYT \cite{Willis2011ADW}. It is a simplified version of the algorithm presented here.
%
	
	\section{Right key - a direct way}\label{rightkeydw}
		Let $T = C_1C_2$ be a straight KN two column tableau and $spl(T)=\ell C_1rC_1\ell C_2rC_2$ a straight semistandard tableau. In particular, $rC_1\ell C_2$ is a semistandard tableau. The \emph{matching between $rC_1$ and $\ell C_2$} is defined as follows:
	
	$\bullet$  Let $\beta_1<\dots<\beta_{m'}$ be the elements of $\ell C_2$. Let $i$ go from $m'$ to $1$, match $\beta _i$ with the biggest, not yet matched, element of $rC_1$ smaller or equal than $\beta_i$.

	\begin{teo}[The direct way algorithm for the right key] \label{mainthm}
	Let $T$ be a straight KN tableau with columns $C_1, C_2, \dots, C_k$, and consider its split form $spl(T)$. For every right column $rC_2, \dots, rC_k$, add empty cells to the bottom in order to have all columns with the same length as $rC_1$. We will fill all of these empty cells recursively, proceeding from left to right. The extra numbers that are written in the column $rC_2$ are found in the following way:
	
	$\bullet$ match $rC_1$ and $\ell C_2$.
	
	$\bullet$ Let $\alpha_1<\dots<\alpha_m$ be the elements of $r C_1$. Let $i$ go from $1$ to $m$. If $\alpha_i$ is not matched with any entry of $\ell C_2$, write in the new empty cells of $rC_2$ the smallest element bigger or equal than $\alpha_i$ such that neither it or its symmetric exist in $rC_2$ or in its new cells. Let $C_2'$ be the column defined by $rC_2$ together with the filled extra cells, after ordering.

	To compute the filling of the extra cells of $rC_3$, we do the same thing, with $C_2'$ and $C_3$. If we do this for all pairs of consecutive columns, we eventually obtain a column $C_k'$, consisting of $rC_k$ together with extra cells, with the same length as $rC_1$. We claim that $C_k'=K_+^1(T)$.
	\end{teo}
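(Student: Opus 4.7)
The plan is to reduce the claim to a two-column lemma and then proceed by induction on the number of columns. By Algorithm \ref{rightkjdtq}, $K_+^1(T)$ is computed iteratively: setting $D_1 := C_1$, for each $i \geq 2$ one forms the two-column tableau $T_i$ with columns $D_{i-1}$ and $C_i$, applies SJDT to swap its column lengths obtaining $T_i'$, and lets $D_i$ denote the rightmost column of $T_i'$; then $K_+^1(T) = rD_k$. The direct-way algorithm in Theorem \ref{mainthm} produces columns $C_2', C_3', \ldots, C_k'$ by matching. I would prove by induction on $i \geq 2$ that $C_i' = rD_i$; taking $i = k$ yields the theorem.

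The crux is the two-column base case, which I would isolate as a lemma: for any straight two-column KN tableau $UV$, if $U''V''$ is the tableau produced from $UV$ by SJDT-swapping its column lengths, then $rV''$ depends only on $rU$ and $\ell V$, and it coincides with the column produced by matching $rU$ against $\ell V$ and filling the extra cells of $rV$ with the unmatched entries of $rU$ under the minimal-avoidance rule of the theorem. The $rU,\ell V$-only dependence is consistent with Lemma \ref{sahpesjdt}, which already says the last column after rectification depends only on its length. To prove this lemma, I would track the elementary slides of SJDT on the split form $\ell U\, rU\, \ell V\, rV$: vertical slides only move the puncture within a fixed column, while horizontal slides of types B.1 and B.2 transport a barred letter from $\ell V$ into $\Phi(U)$ or an unbarred letter from $\Phi(V)$ into $U$, respectively. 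Because the swap takes place inside the Knuth class of a straight tableau, the exceptional 1CC-break of case B.2 does not occur. The matched pairs in the matching of $rU$ against $\ell V$ correspond to entries of $\ell V$ that get absorbed into the $\Phi$-image of $U$ during the slides, whereas each unmatched entry $\alpha_i$ of $rU$ must resurface inside $rV''$, shifted upward to the smallest value whose own symmetric is not already present; this is exactly what is needed to maintain the 1CC.

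Granted the two-column lemma, the inductive step is immediate: by induction $C_{i-1}' = rD_{i-1}$, and applying the lemma to $T_i = D_{i-1}\, C_i$ identifies $rD_i$ as a function of $rD_{i-1}$ and $\ell C_i$, which is exactly how $C_i'$ is defined in the direct algorithm. The main obstacle is therefore the two-column lemma, and within it the careful analysis of horizontal slides and their interaction with $\Phi$: this is the precise point at which the type $A$ recipe based on Willis' earliest weakly increasing subsequence breaks down in type $C$, and the matching together with the minimal-avoidance fill-in rule is engineered so as to track instead the $\Phi$- and 1CC-adjustments occurring during the slides. Verifying this correspondence case by case, and in particular controlling what happens when an unmatched $\alpha_i$ threatens to collide with an already present symmetric entry, is where the real work of the proof lies.
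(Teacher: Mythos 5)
Your skeleton is the same as the paper's: reduce to the two-column situation (the paper phrases this as ``it is enough to do this for $k=2$'', since by Lemma \ref{sahpesjdt} each step of Algorithm \ref{rightkjdtq} only needs the previous right column, which is what the direct algorithm computes), and then analyse the elementary slides of the SJDT to see that matched entries stay put while unmatched entries of $rC_1$ reappear in $rC_2$ under the minimal-avoidance rule. However, as a proof the proposal has a genuine gap: the two-column lemma, which you correctly identify as the crux, is only asserted, not proved. You state that ``matched pairs correspond to entries absorbed into the $\Phi$-image'' and that ``each unmatched entry must resurface shifted to the smallest value whose symmetric is not already present'', and then explicitly defer the case-by-case verification; but that verification is the entire content of the theorem. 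Concretely, two statements need proof and are missing. First, that immediately before each horizontal slide the entry $\alpha$ to the left of the puncture in $rC_1$ is an \emph{unmatched} entry of the matching between $rC_1$ and $\ell C_2$, and that the new entry created in $rC_2$ is the smallest element $\geq\alpha$ such that neither it nor its symmetric lies in $rC_2$; the paper proves this by a case analysis ($\alpha$ barred versus unbarred) of how $\Phi$ and $\Phi^{-1}$ recompute the column $C_2'$.

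Second, and this is the difficulty your write-up does not even name precisely, a single horizontal slide does not merely delete $\alpha$ from $rC_1$ and insert one value into $rC_2$: because $C_1'=\Phi^{-1}(\Phi(C_1)\setminus\{\ast\}\sqcup\{\beta\})$ (or the analogous recomputation on the $C_2$ side), other entries of $rC_1$ and $\ell C_2$ can change value --- in the paper's notation, some $\overline{x}$ of $rC_1$ is replaced by $\overline{\alpha}$, and $\overline{\alpha}$ in $\ell C_2$ may be replaced by a smaller barred letter $\overline{y}$. One must show that the output of the matching-plus-fill procedure is \emph{invariant} under each such perturbed slide; the paper does this with the interleaving arguments (the sequences $\alpha<p_1<\dots<p_m=y$ versus $p_1'\le\dots\le p_m'$ in $\ell C_2$, and $x'<q_1<\dots<q_m<\alpha$ versus $q_1'\le\dots\le q_m'$ in $rC_1$) together with a bumping argument showing that the smallest available fill value is unchanged. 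Without this invariance step your induction ``apply the direct rule, it agrees with one slide, repeat'' does not close, because after one slide the data $rC_1,\ell C_2$ on which the matching is computed are no longer the original ones. Your side remarks (that the $1$CC-break of step $B.2$ cannot occur here, and that the last column depends only on its length) are consistent with the paper, but they do not substitute for the missing analysis; as it stands the proposal is a correct plan whose decisive steps remain to be carried out.
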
	
	\begin{ex}
		Let $T=C_1C_2C_3=\YT{0.17in}{}{
			{{1},{3},{\overline{1}}},
			{{3},{\overline{3}}},
			{{\overline{3}}}}$, with split form 	$spl(T)=\YT{0.17in}{}{
			{{1},{1},{2},{3},{\overline{1}},{\overline{1}}},
			{{2},{3},{\overline{3}},{\overline{2}}},
			{{\overline{3}},{\overline{2}}}}$.
		 We match $rC_1$ and $\ell C_2$, as indicated by the letters $a$ and $b$:
		$\YT{0.17in}{}{
			{{1},{1^a},{2^a},{3},{\overline{1}},{\overline{1}}},
			{{2},{3^b},{\overline{3}^b},{\overline{2}}},
			{{\overline{3}},{\overline{2}}}}$. Hence $\overline{2}$ creates a $\overline{1}$ in $rC_2$, completing the right column  $rC_2$:
	 $\YT{0.17in}{}{
			{{1},{1},{2},{3},{\overline{1}^a},{\overline{1}}},
			{{2},{3},{\overline{3}},{\overline{2}}},
			{{\overline{3}},{\overline{2}},,{{\color{blue}\overline{1}^a}}}}$.
		Now we match $C_2'$ and $\ell C_3$, which is already done, and see what new cells $3$ and $\overline{2}$ create in $rC_3$, obtaining
		$\YT{0.17in}{}{
			{{1},{1},{2},{3},{\overline{1}},{\overline{1}}},
			{{2},{3},{\overline{3}},{\overline{2}},,{{\color{blue}3}}},
			{{\overline{3}},{\overline{2}},,{{\color{blue}\overline{1}}},,{{\color{blue}\overline{2}}}}}$. Hence $K_+^1(T)=\YT{0.17in}{}{
			{{3}},
			{{\overline{2}}},{{\overline{1}}}}$ is obtained from $C'_3$ after reordering its entries.
	\end{ex}
	
	\subsection{The proof of Theorem \ref{mainthm}}
	It is enough to prove that by the end of this algorithm, the entries in $C_k'$ are the entries on the right column of the rightmost column of $T'_k$ from Algorithm \ref{rightkjdtq}.
	In fact, it is enough to do this for $k=2$. For bigger $k$ note that the entries that are "slid" into $C_k$ come from $rC_{k-1}$, so, to go to the next step on the SJDT algorithm we only need to know the previous right column, which is exactly what we claim to compute this way. The next lemma determines which number is added to $rC_2$ given that we know $\alpha$, the entry that is horizontally slid:

	\begin{lema}
		Suppose that $T=C_1C_2$ is a non-rectangular two-column tableau (if the tableau is rectangular then we have nothing to do). Play the SJDT on this tableau which ends up moving one cell from the first column to the second (some entries may change their values). Then,

$\bullet$ Immediately before the horizontal slide of the SJDT, the entry $\alpha$, on the left of the puncture,  is an unmatched cell of $rC_1$.

$\bullet$ Call $C_1'$ and $C_2'$ to both columns after the horizontal slide on $T$. The new entry in $rC_2'$, compared to $rC_2$, is the smallest element bigger or equal than $\alpha$ such that neither it or its symmetric exist in $rC_2$.
	\end{lema}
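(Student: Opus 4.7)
My plan is to track the puncture's trajectory in the reverse SJDT (which is what swaps column lengths in Algorithm~\ref{rightkjdtq}) and to read off both bullet points from the slide record. Setting $\alpha_1<\cdots<\alpha_m$ for the entries of $rC_1$ and $\beta_1<\cdots<\beta_{m'}$ for those of $\ell C_2$ (as in the matching of Theorem~\ref{mainthm}), the semistandardness of $spl(T)=\ell C_1\,rC_1\,\ell C_2\,rC_2$ gives $\alpha_i\le\beta_i$ for every $i\le m'$. Inverting the forward elementary rule of Section~\ref{tabjdt} in the split form, a reverse vertical slide at puncture row $r+1$ of $C_2$ happens exactly when $\beta_r$ exists and $\beta_r\ge\alpha_{r+1}$; otherwise the horizontal slide occurs. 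Thus the vertical slides terminate at some row $s+1$ with either $s=0$ or $\beta_s<\alpha_{s+1}$, and the entry on the left of the puncture at that moment is precisely $\alpha=\alpha_{s+1}$.

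For the first bullet I would argue, by downward induction on $i$, that for every $i\ge s+1$ the greedy matching pairs $\beta_i$ with some $\alpha_{j(i)}$ where $j(i)\ge i+1$: the slide condition $\alpha_{i+1}\le\beta_i$, combined with the inductive hypothesis that all previously matched $\alpha$-indices are $\ge i+2$ (so $\alpha_{i+1}$ is still available), makes $\alpha_{i+1}$ a valid candidate for $\beta_i$, and the ``largest available'' rule then yields $j(i)\ge i+1$. Consequently no $\beta_i$ with $i\ge s+1$ takes $\alpha_{s+1}$; for $i\le s$ the inequality $\alpha_{s+1}>\beta_s\ge\beta_i$ rules out the pairing directly. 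Thus $\alpha_{s+1}$ is unmatched.

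For the second bullet I would split on whether $\alpha$ is unbarred (reverse of B.2, giving $C_2'=\Phi^{-1}(\Phi(C_2)\cup\{\alpha\})$) or barred (reverse of B.1, giving $C_2'=C_2\cup\{\alpha\}$). The crucial observation is that a right column carries no paired entries (each $t_i$ is outside its column by definition), which implies that $rC_2'=rC_2\cup\{\widehat{\alpha}\}$ for a single new letter $\widehat{\alpha}$. Pair-freeness and admissibility of $rC_2'$ force $\widehat{\alpha},\overline{\widehat{\alpha}}\notin rC_2$, while the row-weakness of $spl$ surviving the slide forces $\widehat{\alpha}\ge\alpha$. A direct inspection of $\Phi^{-1}$ (respectively of the updated set $J$ for $C_2'$) then produces $\widehat{\alpha}$ as the smallest letter meeting these constraints.

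The main obstacle is the barred subcase when $\overline{\alpha}$ already lies in $C_2$: a new pair $(|\alpha|,\overline{|\alpha|})$ then appears in $C_2'$, shifting the set $J$ to some $J'$ and altering the barred part of $rC_2'$. I anticipate that a careful side-by-side comparison of $J$ and $J'$ will confirm that the net change is still a single new letter with the stated minimality, matching the prescription of Theorem~\ref{mainthm}.
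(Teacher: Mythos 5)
Your argument for the first bullet is reasonable and, interestingly, does something the paper's own proof of this lemma does not: the printed proof only treats the second bullet (the unmatchedness of $\alpha$ is only touched inside the proof of Theorem~\ref{mainthm}), whereas your reverse-slide characterization (vertical at row $r+1$ iff $\beta_r\ge\alpha_{r+1}$) plus the downward induction showing each $\beta_i$ with $i\ge s+1$ is matched to some $\alpha_{j}$ with $j\ge i+1$ is a clean, direct justification. You should, however, actually state and verify the reverse elementary rule you invert, including the tie case $\beta_r=\alpha_{r+1}$, since the paper never formalizes the reverse step and your whole first bullet rests on it.

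The second bullet is where the gap lies, and it is exactly the content of the paper's proof. Your key step ``$rC_2'=rC_2\cup\{\widehat{\alpha}\}$ for a single new letter'' does not follow from right columns being pair-free: inserting $\alpha$ can change the splitting data of $C_2$, not just add a letter. Concretely, if $\alpha$ is barred and $\overline{\alpha}\in C_2$, a new pair $(z,\overline z)$ appears and the whole set of witnesses $t_i$ is recomputed; if $\alpha$ coincides with one of the letters $\overline{t_i}$ already used to split $C_2$ (i.e.\ $\alpha\in\Phi(C_2)$), that witness is displaced and the displacement can cascade through smaller witnesses. That $rC_2$ nevertheless sits inside $rC_2'$ and that the net effect is a single new letter equal to the smallest letter $\ge\alpha$ whose symmetric pair avoids $rC_2$ is precisely what must be proved; the paper does it by running through the sub-cases ($\alpha,\overline\alpha$ absent from $C_2$ and $\Phi(C_2)$; $\overline\alpha\in C_2$; $\alpha\in\Phi(C_2)$) in both the barred and unbarred cases and tracking where the displaced witness is ``sent to the next available number.'' Similarly, your claim that row-weakness of the split form forces $\widehat{\alpha}\ge\alpha$ is unsubstantiated: after the slide the columns are reordered, so the new letter need not occupy the puncture's row, and both the lower bound and the minimality come out of the same witness bookkeeping rather than from semistandardness. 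Since you explicitly postpone this comparison of $J$ and $J'$ (``I anticipate that a careful side-by-side comparison \dots''), the essential part of the lemma remains unproved in your proposal.
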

\begin{ex}
Let $T=\YT{0.17in}{}{
		{{2},{3}},
		{{3},{4}},
		{{5},{\overline{5}}},
		{{\overline{5}}},
		{{\overline{2}}}}$. After splitting, and just before the first horizontal slide, we have  $T=\YT{0.17in}{}{
		{{1},{2},{3},{3}},
		{{3},{3},{4},{4}},
		{{4},{5},{\overline{5}},{\overline{5}}},
		{{\overline{5}},{\overline{4}},{\ast}, {\ast}},
		{{\overline{2}},{\overline{1}}}}$. 
		The new entry in $rC_2$ is $\overline{2}$, as predicted by the lemma:
		 $\YT{0.17in}{}{
		{{1},{2},{2},{3}},
		{{3},{3},{3},{4}},
		{{4},{4},{\overline{5}},{\overline{5}}},
		{{\ast},{\ast},{\overline{4}},{\overline{2}}},
		{{\overline{2}},{\overline{1}}}}$. 
\end{ex}
	\begin{proof}
		\textbf{Case 1:} $\alpha$ is barred. Then $C_2'=C_2\cup\{\alpha\}$. If $\overline{\alpha}$  does  not exist neither in $C_2$ nor in $\Phi(C_2)$, then $\alpha$ will exist in both $C_2'$ and $\Phi(C_2')$. 
		If $\overline{\alpha}$ does exist in $C_2$, and consequently in $\Phi(C_2)$ (but $\alpha \notin \Phi(C_2)$), then $\alpha$ and $\overline{\alpha}$ will both exist in $C_2'$. Hence, in the construction of the barred part of $\Phi(C_2')$, compared to $\Phi(C_2)$, there will be a new barred number which is the smallest number bigger (or equal, but the equality can not happen) than $\alpha$ such that neither it nor its symmetric exist in the barred part of $\Phi(C_2)$ or the unbarred part of $C_2$ (i.e., $rC_2$). If $\alpha$ existed in $\Phi(C_2)$, then $\overline{\alpha}$ existed in $\Phi(C_2)$. That means that whatever number got sent to $\alpha$ in the construction of $\Phi(C_2)$ will be sent to the next available number, meaning that in $rC_2$ will appear a new number, the smallest number bigger (or equal, but the equality can not happen because $\alpha$ is already there) than $\alpha$ such that neither it nor its symmetric exist in $rC_2$.

		\textbf{Case 2:} $\alpha$ is unbarred. Then $C_2'=\Phi^{-1}(\Phi(C_2)\cup \{\alpha\})$. If $\overline{\alpha}$ does  not exist in $C_2$ nor in
		 $\Phi(C_2)$, then $\alpha$ will exist in both $C_2'$ and $\Phi(C_2')$. 
		If $\overline{\alpha}$ existed in $\Phi(C_2)$, and consequently in $C_2$, then both $\alpha$ and $\overline{\alpha}$ will exist in $\Phi(C_2')$, hence, if we start in the coadmissible column, in the construction of the unbarred part of $C_2'$, compared to $C_2$, there will be a new unbarred number which is the smallest number bigger than $\alpha$ such that neither it nor its symmetric exist in $rC_2$. Finally, if $\alpha$ existed in $C_2$, then $\overline{\alpha}$ also existed in $C_2$.
		 That means that whatever number got sent to $\alpha$ in the construction of $C_2$, from $\Phi(C_2)$, will be sent to the next available number, meaning that in $rC_2$ will appear a new number, the smallest number bigger than $\alpha$ such that neither it nor its symmetric exist in $rC_2$.
	\end{proof}
	
\begin{proof}[Proof of Theorem \ref{mainthm}]
Each SJDT in $T$, a two-column skew tableau, moves a cell from the first to the second column. We will prove that if we apply the direct way algorithm after each SJDT, the output $C_2'$ does not change. The cells on $\ell C_2$ without cells to its left do not get to be matched. When we slide horizontally, the columns $rC_1$ and $\ell C_2$ may change more than the adding/removal of $\alpha$, the horizontally slid entry.	
	Since the horizontal slides happen from top to bottom, we only need to see  what changes happen to bigger entries than the one slid. 
All entries above $\alpha$ are matched to the entry in the same row in $\ell C_2$.
	
	If $\alpha$ is barred then, the remaining barred entries of $rC_1$ and $\ell C_2$ remain unchanged, and since all entries above $\alpha$, including the unbarred ones, are matched to the entry directly on their right, there is no noteworthy change and everything runs as expected.
	
	If $\alpha$ is unbarred then, the remaining unbarred entries of $rC_1$ and $\ell C_2$ remain unchanged.
	In the barred part of $rC_1$ either nothing happens, or there is an entry bigger than $\overline{\alpha}$, $\overline{x}$, that gets replaced by $\overline{\alpha}$. Note that $\overline{x}$ must be such that for every number between $\overline{x}$ and $\overline{\alpha}$, either it or its symmetric existed in $rC_1$.
	In the barred part of $\ell C_2$, if $\overline{\alpha}\in \ell C_2$, then $\overline{\alpha}$ gets replaced by $\overline{y}$, smaller than $\overline{\alpha}$, such that for every number between $\overline{y}$ and $\overline{\alpha}$, either it or its symmetric existed in $\ell C_2$, and both $y$ and $\overline{y}$ do not exist in $\ell C_2$. 
	
	Let's look to $\ell C_2$. Let $\alpha<p_1<p_2<\dots< p_m=y$ be the numbers between $\alpha$ and $y$ that does not  exist in $\ell C_2$, right before the horizontal slide. Then, their symmetric exist in $\ell C_2$. For all numbers in $rC_2$ between $\alpha$ and $y$, exist, in the same row in $r C_1$, a number between $\alpha$ and $y$. Let $\alpha<p_1'<p_2'<\dots< p_m'= y$ be the missing numbers between $\alpha$ and $y$ in $rC_1$, then $p_i\leq p_i'$.
	Note that $\overline{p_1}>\overline{p_2}>\dots >\overline{p_m}=\overline{y}$ exist in $\ell C_2$ after the horizontal slide and that the biggest numbers between $\overline{\alpha}$ and $\overline{y}$ (not including $\overline{\alpha}$) that can exist in $rC_1$ are $\overline{p_1'}>\overline{p_2'}>\dots> \overline{p_m'}$, and since $\overline{p_i}\geq \overline{p_i'}$, the matching holds for this interval after swapping $\overline{\alpha}$ by $\overline{y}$ in $\ell C_2$.

	Now let's look to $rC_1$. Before the slide, call $\overline{x'}$ to the biggest unmatched number of $rC_1$ smaller or equal then $\overline{x}$. If no such $\overline{x'}$ exists, then everything in $rC_1$ between $\overline{\alpha}$ and $\overline{x}$ is matched, hence swapping $\overline{x}$ by $\overline{\alpha}$ will keep all of them matched, meaning that the algorithm works in this scenario.
	Let $x'<q_1<q_2<\dots< q_m<\alpha$ be the numbers between $x'$ and $\alpha$ that does not  exist in $r C_1$, right before the horizontal slide. Then, their symmetric exist in $r C_1$. For all numbers in $r C_1$ between $x'$ and $\alpha$, exist, in the same row in $\ell C_2$, a number between $x'$ and $\alpha$, because $\alpha$ is unmatched. Let $x'<q_1'<q_2'<\dots< q_m'<\alpha$ be the missing numbers between $x'$ and $\alpha$ in $\ell C_2$, then $q_i\geq q_i'$.
	Note that $\overline{q_1}>\overline{q_2}>\dots >\overline{q_m}>\overline{\alpha}$ exist in $r C_1$ after the horizontal slide and the numbers between $\overline{x'}$ and $\overline{\alpha}$ that can exist in $\ell C_2$ are $\overline{q_1'}>\overline{q_2'}>\dots> \overline{q_m'}$, and since $\overline{q_i}\leq \overline{q_i'}$, these numbers are matching a number bigger or equal then $q_i$  in $r C_1$, meaning that $\alpha$ is unmatched in $rC_1$.
	Ignoring signs, the numbers that appear in either $rC_1$ or $\ell C_2$ are the same. So before playing the SJDT, applying the direct way algorithm we have that the unmatched numbers in $rC_1$ are sent to the not used numbers of $\overline{q_1'}>\overline{q_2'}>\dots> \overline{q_m'}$ in $\ell C_2$ (this is a bijection), and $\overline{x'}$ is sent to the smallest available number, bigger or equal than $\overline{x'}$. Now consider $rC_1$ and $\ell C_2$ after the slide. In $rC_1$ we replace $x'$ by $\overline{\alpha}$ and remove $\alpha$ and in $\ell C_2$ there is $\alpha$ or $\overline{\alpha}$. In the direct algorithm, all unmatched numbers of $\overline{q_1}>\overline{q_2}>\dots >\overline{q_m}>\overline{\alpha}$ are sent to the not used numbers of $\overline{q_1'}>\overline{q_2'}>\dots> \overline{q_m'}$ in $\ell C_2$, but now we have more numbers in the first set than in the second, meaning that $\overline{\alpha}$ will bump the image of the least unmatched number, which will bump the image of the second least unmatched number, and so on, meaning that the image of biggest unmatched will be out of this set. This image will be the smallest number available, which was the image of $x'$ before the horizontal slide.
	
	Hence, the outcome of the direct way does not change due to the changes to the columns when we play the SJDT, meaning that the outcome is what we intend.
\end{proof}
\begin{obs}
	It is possible to modify this algorithm in order to compute left keys.
\end{obs}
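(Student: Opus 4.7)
The plan is to mirror the algorithm of Theorem \ref{mainthm} by reversing the direction of processing and exchanging the roles of right and left columns. Concretely, for $T = C_1 \cdots C_k$, I would define a dual matching between $\ell C_{i+1}$ and $r C_i$ by enumerating the elements of $r C_i$ in decreasing order and matching each to the smallest unmatched element of $\ell C_{i+1}$ that is greater than or equal to it. Scanning from right to left, the unmatched elements of $\ell C_{i+1}$ would create new cells in $\ell C_i$, each filled with the largest letter less than or equal to the generating entry such that neither it nor its symmetric already appears in the developing left column. Iterating through the pairs $(C_{k-1}, C_k), (C_{k-2}, C_{k-1}'), \ldots$, the column $C_1'$ produced at the end should be the last column of the left key; denoting this operation by $K_-^1$, the full left key would then be $K_-(T) = K_-^1(C_1) \, K_-^1(C_1 C_2) \, \cdots \, K_-^1(C_1 \cdots C_k)$, computed by applying the procedure to each suffix-closed prefix.

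To justify this, I would first establish the dual of Lemma \ref{sahpesjdt} and the refinement underlying Algorithm \ref{rightkjdtq}: in the permutohedron of skew tableaux Knuth-equivalent to $T$ with permuted column-length sequence, the \emph{first} column of any such skew tableau depends only on its length, so $K_-^1(T)$ is well defined as the left column of that first column. Then I would prove a dual version of the technical lemma preceding the proof of Theorem \ref{mainthm}: immediately before a reverse horizontal slide, the entry $\alpha$ on the right of the puncture is an unmatched cell of $\ell C_2$, and the change in $\ell C_1$ compared to $\ell C_1'$ is precisely the insertion of the largest letter less than or equal to $\alpha$ whose symmetric also avoids $\ell C_1$. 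This step would follow the two-case structure (barred/unbarred $\alpha$) of the existing lemma, tracking the updates $C_1 \mapsto C_1 \cup \{\alpha\}$ or $C_1 \mapsto \Phi^{-1}(\Phi(C_1) \cup \{\alpha\})$ and how they propagate through $\Phi$ to the left column.

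The main obstacle will be the invariance of the dual matching under reverse horizontal slides, analogous to the intricate bookkeeping in the second half of the proof of Theorem \ref{mainthm}. Because the map $\Phi$ between admissible and coadmissible columns is not symmetric under the bar involution $i \mapsto \overline{i}$, the left-key algorithm cannot be deduced from the right-key algorithm by formally applying a single involution to inputs and outputs. Instead, the invariance of the matching after swapping an element $\overline{\alpha}$ for some nearby letter in either $rC_i$ or $\ell C_{i+1}$ must be re-proven by redoing the interval analysis with the chains of missing values $p_i \leq p_i'$ and $q_i \geq q_i'$ dualized, replacing "smallest bigger" selections by "largest smaller" ones and reversing the direction of the bumping argument. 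Once this parallel analysis is carried out, the output $C_1'$ of the dual procedure matches the left column of the first column of the SJDT-rectified skew tableau whose first column has the prescribed length, and the claim of the remark follows.
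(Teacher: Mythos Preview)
The paper offers no proof of this remark: it is a single sentence at the very end of an extended abstract, with the details deferred to ``a full paper to appear.'' So there is nothing in the paper to compare your argument against; you have supplied far more than the authors do here.

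That said, your sketch contains a directional slip that would have to be repaired before it could be carried out. In the right-key algorithm one passes from the longer column $C_1$ to the shorter $C_2$ and \emph{enlarges} $rC_2$ to a column $C_2'$ of length $|C_1|$; the unmatched entries of $rC_1$ supply the new cells. In the left-key dual, scanning right to left as you propose, each step must produce a column $C_i'$ of length $|C_k|$, which is \emph{shorter} than $|C_i|$. Thus one is not ``creating new cells in $\ell C_i$'' from unmatched entries of $\ell C_{i+1}$ (indeed, $\ell C_{i+1}$ is the shorter side of the match, so all of its entries are matched). Rather, the matching should record which entries of the left column survive the reverse SJDT, and the output $C_i'$ is obtained by discarding the unmatched entries of $rC_i$ (and then passing to the appropriate $\ell$-variant), with a correction rule dual to the ``smallest $\geq \alpha$ avoiding $rC_2$ and its symmetric'' clause. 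Your identification of the key technical step---a dual of the two-case lemma tracking how $\Phi$ interacts with insertion, and an invariance argument for the matching under each SJDT slide---is exactly right, and so is your warning that the bar involution alone does not convert one proof into the other; but the bookkeeping of which column shrinks and which set of entries is ``unmatched'' needs to be reversed before the interval analysis can be replayed.
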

\emph{I am grateful to O. Azenhas, my Ph.D. advisor, for her help on the preparation of this paper.}

\printbibliography

\end{document}